\newenvironment{myabstract}{\par\noindent
{\bf Abstract . } \small }
{\par\vskip8pt minus3pt\rm}
\newcounter{item}[section]
\newcounter{kirshr}
\newcounter{kirsha}
\newcounter{kirshb}
\newenvironment{enumarab}{\setcounter{kirshb}{1}
\begin{list}{(\arabic{kirshb})}{\usecounter{kirshb}} }{\end{list}}
\newtheorem{theorem}{Theorem}[section]
\newtheorem{lemma}[theorem]{Lemma}
\newtheorem{corollary}[theorem]{Corollary}
\theoremstyle{definition}
\newtheorem{definition}[theorem]{Definition}
\def\C{{\mathfrak{C}}}
\def\At{{\bf At}}
\def\Nr{{\mathfrak{Nr}}}
\def\A{{\mathfrak{A}}}
\def\B{{\mathfrak{B}}}
\def\C{{\mathfrak{C}}}
\def\N{{\mathfrak{N}}}
\def\CA{{\bf CA}}
\def\K{{\bf K}}
\def\K{{\bf K}}
\def\RCA{{\bf RCA}}
\def\(R)RA{{\bf (R)RA}}
\def\c #1{{\cal #1}}
 \def\CA{{\sf CA}}
\def\B{{\sf B}}
\def\w{{\sf w}}
\def\y{{\sf y}}
\def\g{{\sf g}}
\def\r{{\sf r}}
\def\K{{\sf K}}
 \def\Cm{{\mathfrak{Cm}}}
\def\Nr{{\mathfrak{Nr}}}
\def\SNr{{\bf S}{\mathfrak{Nr}}}
\def\restr #1{{\restriction_{#1}}}
\def\set#1{\{#1\} }
\def\Nr{{\mathfrak{Nr}}}
\def\Tm{{\mathfrak{Tm}}}
\def\A{{\mathfrak{A}}}
\def\B{{\mathfrak{B}}}
\def\C{{\mathfrak{C}}}
\def\A{{\mathfrak{A}}}
\def\B{{\mathfrak{B}}}
\def\C{{\mathfrak{C}}}
\def\GG{{\mathfrak{GG}}}
\def\At{{\mathfrak{At}}}
\def\CA{{\bf CA}}
\def\RCA{{\bf RCA}}
\def\At{{\sf{At}}}
\def\N{\mathbb{N}}
\def\c #1{{\cal #1}}
\def\pa{$\forall$}
\def\pe{$\exists$}
\def\nodes{{\sf nodes}}
\def\restr #1{{\restriction_{#1}}}
\def\Nr{{\mathfrak{Nr}}}
\def\CA{{\bf CA}}
\def\RCA{{\bf RCA}}
\def\c#1{{\mathcal #1}}
\def\set#1{ \{#1\}}
\def\pe{$\exists$}
\def\pa{$\forall$}
\def\Cm{{\mathfrak Cm}}
\def\N{{\cal N}}
\def\At{{\sf At}}
\def\Cm{{\sf Cm}}
\def\w{{\sf w}}
\def\g{{\sf g}}
\def\y{{\sf y}}
\def\r{{\sf r}}
\def\ws{winning strategy}
 \def\CA{{\sf CA}}
\def\RCA{{\sf RCA}}
\def\y{{\sf y}}
\def\g{{\sf g}}
\def\r{{\sf r}}
\def\w{{\sf w}}
\def\N{{\mathbb{N}}}
\title{ For $n\geq 3$ finite, and $k>3$, the class $S\Nr_n\CA_{n+k}$ is not atom-canonical}
\author{Tarek Sayed Ahmed\\
Department of Mathematics, Faculty of Science,\\ 
Cairo University, Giza, Egypt.
  }
\begin{document}
\maketitle
\begin{myabstract} We show that for finite $n\geq 3$ and $k>3$, the class $S\Nr_n\CA_{n+k}$ 
is not atom-canonical solving an open problem first officially reported in \cite{HHbook}, 
and re-appearing in the late \cite{1}.

\end{myabstract}

\section{Introduction}

Throughout we follow the notation of \cite{1}, which is in conformity with the notation \cite{HMT1}. 
The main result in this paper solves an open problem first announced by Hirsch and Hodkinson in \cite{HHbook} 
and re-appearing 
in the late \cite{1}. In the first reference the question is attributed to N\'emeti and the present author, 
and in the second reference, it appears in our chapter \cite{Sayed}.

We use the techniques in \cite{Hodkinson} and \cite{r}. 
In particular, our algebra witnessing non atom canonicity of the class in question, will be a finite rainbow cylindric algebra, 
blown up and blurred in the sense of \cite{ANT}. Our first theorem is conditional. 
It draws the conclusion declared in the astract given that a certain finite rainbow cylindric algebra exists, using
an algebra similar to an existing rainbow algebra in the literature constructed by Hodkinson.
The rest of paper shows that such an algebra indeed does exist; and it is fairly simple. A special game is used
to show that this finite algebra does not neatly embed into $4$ extra dimensions. 

The method of Andre\'ka's splitting is used, by splitting the red atoms,  to blow up and blur the algebra giving a new 
atom structure that is
weakly representable but its completion is not even in $S\Nr_n\CA_{n+4}$, hence, in particular, it is not strongly representable.
The idea is that the finite algebra embeds into the complex algebra by taking every red atom (graph) to the join of its copies.
These exist in the complex algebra but not in the term algebra. 

Our proof relies heavily on Hodkinson's construction in \cite{Hodkinson}, 
in fact our term algebra constructed, whose completion is not in $S\Nr_n\CA_{n+4}$ is almost identical to his
weakly representable atom structure constructed in op.cit. Our construction 
has affinity to the blow up and blur constructions in \cite{ANT} 
and also to lemmas 17.32, 17.34, 17.36, in \cite{HHbook}, a typical blow up and blur construction for rainbow relation 
algebras, proving the analagous result for relation algebras.

The idea of a blow up and blur construction is subtle, and not so hard. The technique is strong. 
Assume that $\K\subseteq {\sf L}$. 
One starts with a(n atomic) finite algebra $\A$ that is not in $\K$. 
The algebra is blown up an blurred by splitting some of the atoms each into $\omega$ many copies. This gives a new infinite 
atom  structure $\At$; the finite algebra is blurred at this level, it does not embed into the term algebra based on this atom structure 
$\Tm\At$, which is constructed to be in ${\sf L}$. But it re-appears on the global level, namely in the complex
algebra based on $\At$, which is the completion of $\Tm\At$, by mapping each atom to the join of its copies. The latter is complete, so
this is well defined. If $\A\notin \K$ and $\K$ is closed under forming subalgebras, 
then $\C$ will not be in $\K,$ as well. But $\C$ is the completion of $\Tm\At$, 
so we get an algebra in ${\sf L}$, namely, $\Tm\At$ whose completion, namely $\Cm\At$, that is not in $\K$. 
Hence $\K$ is not atom canonical, and is not closed
under completions.  In our subsequent investigations $\K$ will be the class $S\Nr_n\CA_{n+4}$ and
${\sf L}$ will be $\RCA_n$, $n$ finite $>2$.

\section{Rainbow cylindric algebras}

We will use the rainbow construction to cylindric algebras of finite dimension $>2$.
Let $A$, $B$ be two relational structures. 
We define a cylindric algebra $\CA_{A, B}$ whose atoms are finite coloured graphs.
That is its atom structure is based on the colours:

\begin{itemize}

\item greens: $\g_i$ ($1\leq i<n-2)$, $\g_0^i$, $i\in A$.

\item whites : $\w, \w_i: i<n-2$
\item reds:  $\r_{ij}$ $(i,j\in B)$,

\item shades of yellow : $\y_S: S\subseteq_{\omega}B$, $S=B.$

\end{itemize}

And coloured graphs are:
\begin{definition}
\begin{enumarab}

\item $M$ is a complete graph.

\item $M$ contains no triangles (called forbidden triples)
of the following types:

\vspace{-.2in}
\begin{eqnarray}
&&\nonumber\\
(\g, \g^{'}, \g^{*}), (\g_i, \g_{i}, \w),
&&\mbox{any }i\in n-1\;  \\
(\g^j_0, \g^k_0, \w_0)&&\mbox{ any } j, k\in A\\
\label{forb:pim}(\g^i_0, \g^j_0, \r_{kl})&&\\
\label{forb:match}(\r_{ij}, \r_{j'k'}, \r_{i^*k^*})&&\mbox{unless }i=i^*,\; j=j'\mbox{ and }k'=k^*
\end{eqnarray}
and no other triple of atoms is forbidden.

\item If $a_0,\ldots   a_{n-2}\in M$ are distinct, and no edge $(a_i, a_j)$ $i<j<n$
is coloured green, then the sequence $(a_0, \ldots a_{n-2})$
is coloured a unique shade of yellow.
No other $(n-1)$ tuples are coloured shades of yellow.

\item If $D=\set{d_0,\ldots  d_{n-2}, \delta}\subseteq M$ and
$\Gamma\upharpoonright D$ is an $i$ cone with apex $\delta$, inducing the order
$d_0,\ldots  d_{n-2}$ on its base, and the tuple
$(d_0,\ldots d_{n-2})$ is coloured by a unique shade
$y_S$ then $i\in S.$

\end{enumarab}
\end{definition}
So we are dealing with a class of models $\K$, each member of $\K$
is a coloured graph,
and the defining relations above can be coded in $L_{\omega_1,\omega}$, more precisely,
every green, white, red, atom corresponds to a binary relation, and every $n-1$ colour is coded as an $n-1$ relations,
and the coloured graphs are defined
as the models of a set of an $L_{\omega_1,\omega}$ theory, as presented in \cite{HHbook2}.

Now from  these coloured graphs we define an atom structure of a $\CA_n$.
Let $${\sf J}=\{a: a \text { is a surjective map from $n$ onto some } M\in \K\}.$$
We may write $M_a$ for the element of ${\sf J}$ for which
$a:n\to M$ is a surjection.
Let $a, b\in {\sf J}$. Define the following equivalence relation: $a \sim b$ if and only if
\begin{itemize}
\item $a(i)=a(j)\text { and } b(i)=b(j)$

\item $M_a(a(i), a(j))=M_b(b(i), b(j))$ whenever defined

\item $M_a(a(k_0)\dots a(k_{n-2}))=M(b(k_0)\ldots b(k_{n-1}))$ whenever
defined
\end{itemize}
Let $\At$ be the set of equivalences classes. Then define
$$[a]\in E_{ij} \text { iff } a(i)=a(j)$$
$$[a]T_i[b] \text { iff }a\upharpoonright n\sim \{i\}=b\upharpoonright n\sim \{i\}.$$

This defines a  $\CA_n$
atom structure. By $\CA_{A,B}$ we shall mean any algebra based on this atom structure, context will specify.
Some coloured graphs should deserve special attention:
\begin{definition}
Let $i\in A$, and let $\Gamma$ be a coloured graph  consisting of $n$ nodes
$x_0,\ldots  x_{n-2}, z$. We call $\Gamma$ an $i$ - cone if $\Gamma(x_0, z)=\g^0_i$
and for every $1\leq j\leq n-2$ $\Gamma(x_j, z)=\g_j$,
and no other edge of $\Gamma$
is coloured green.
$(x_0,\ldots x_{n-2})$
is called the center of the cone, $z$ the apex of the cone
and $i$ the tint of the cone.
\end{definition}

Games on these atom structures are the usual atomic games played on networks \cite{HHbook}, \cite{HHbook2},
translated  to coloured graphs. (This is a special case of the correspondence between networks on atom structures of a class of models
and the models, this correspondence also holds for Monk's algebras, defined also as an instance of algebras 
based on atom structures constructed from classes 
of models.)

Let $\delta$ be a map. Then $\delta[i\to d]$ is defined as follows. $\delta[i\to d](x)=\delta(x)$
if $x\neq i$ and $\delta[i\to d](i)=d$. We write $\delta_i^j$ for $\delta[i\to \delta_j]$.
We recall that atomic networks are defined as follows.

\begin{definition}
Let $2\leq n<\omega.$ Let $\C$ be an atomic $\CA_{n}$.
An \emph{atomic  network} over $\C$ is a map
$$N: {}^{n}\Delta\to \At\C$$
such that the following hold for each $i,j<n$, $\delta\in {}^{n}\Delta$
and $d\in \Delta$:
\begin{itemize}
\item $N(\delta^i_j)\leq {\sf d}_{ij}$
\item $N(\delta[i\to d])\leq {\sf c}_iN(\delta)$
\end{itemize}
\end{definition}
Note than $N$ can be viewed as a hypergraph with set of nodes $\Delta$ and
each hyperedge in ${}^{\mu}\Delta$ is labelled with an atom from $\C$.
We call such hyperedges atomic hyperedges.
We write $\nodes(N)$ for $\Delta.$ We let $N$ stand for the set of nodes
as well as for the function and the network itself. Context will help.
We assume that $\nodes(N)\subseteq \N$.

For $n\leq \omega$, $G^n$ denotes the usual atomic game with $n$ rounds \cite{HHbook2}.
Then a \ws\ for \pe\ in $n$ rounds using the unlimited number of pebbles
can be coded in a first order sentence called a Lyndon condition.

\begin{definition}
Let $M\in \K$ be arbitrary. Define the corresponding network $N_{M}$
on $\C$,
whose nodes are those of $M$
as follows. For each $a_0,\ldots a_{n-1}\in M$, define
$N_{M}(a_0,\ldots  a_{n-1})=[\alpha]$
where $\alpha: n\to M\upharpoonright \set{a_0,\ldots a_{n-1}}$ is given by
$\alpha(i)=a_i$ for all $i<n$. Then, as easily checked,  $N_{M}$ is an atomic $\C$ network.
Conversely, let $N$ be any non empty atomic $\C$ network.
Define a complete coloured graph $M_N$
whose nodes are the nodes of $N$ as follows:
\begin{itemize}
\item For all distinct $x,y\in M_N$ and edge colours $\eta$, $M_N(x,y)=\eta$
if and only if  for some $\bar z\in ^nN$, $i,j<n$, and atom $[\alpha]$, we have
$N(\bar z)=[\alpha]$, $z_i=x$ $z_j=y$ and the edge $(\alpha(i), \alpha(j))$
is coloured $\eta$ in the graph $\alpha$.

\item For all $x_0,\ldots x_{n-2}\in {}^{n-1}M_N$ and all yellows $\y_S$,
$M_N(x_0,\ldots x_{n-2})= \y_S$ if and only if
for some $\bar z$ in $^nN$, $i_0, \ldots  i_{n-2}<n$
and some atom $[\alpha]$, we have
$N(\bar z)=[\alpha]$, $z_{i_j}=x_j$ for each $j<n-1$ and the $n-1$ tuple
$\langle \alpha(i_0),\ldots \alpha(i_{n-2})\rangle$ is coloured
$\y_S.$ Then $M_N$ is well defined and is in $\K$.
\end{itemize}
\end{definition}
The following is then, though tedious and long,  easy to  check:

\begin{theorem}
For any $M\in \K$, we have  $M_{N_{M}}=M$,
and for any network
$N$ on $\At$, $N_{{M}_N}=N.$
\end{theorem}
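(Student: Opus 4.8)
The plan is to verify the two identities by a direct unwinding of the definitions, checking that the round-trip construction $M \mapsto N_M \mapsto M_{N_M}$ recovers the colouring of $M$ on every edge and every $(n-1)$-tuple, and that $N \mapsto M_N \mapsto N_{M_N}$ recovers the atom labelling $N(\bar z)$ of every hyperedge $\bar z$. First I would fix $M \in \K$. Since $N_M$ has the same nodes as $M$, so does $M_{N_M}$, so it suffices to show the colourings agree. For an edge $(x,y)$ with $x\ne y$: by the definition of $M_N$ applied to $N = N_M$, the colour $M_{N_M}(x,y)$ is the colour $\eta$ such that there exist $\bar z \in {}^nN_M$, $i,j<n$, with $(N_M(\bar z))$ an atom $[\alpha]$, $z_i = x$, $z_j = y$, and the edge $(\alpha(i),\alpha(j))$ coloured $\eta$ in $\alpha$. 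Taking $\bar z$ to be any surjection $n \to M\restriction\{x,y,\dots\}$ realising $x,y$ in coordinates $i,j$ — which exists and for which $N_M(\bar z) = [\alpha]$ with $\alpha$ the corresponding map — the edge $(\alpha(i),\alpha(j))$ has the same colour as $(x,y)$ in $M$ by the very definition of $N_M$ via $\alpha(i)=a_i$. Well-definedness (independence of the choice of $\bar z, i, j$) follows from the definition of the equivalence relation $\sim$ collapsing atoms that agree on the induced subgraph. The argument for $(n-1)$-tuples coloured by shades of yellow is identical, reading off $\y_S$ instead of an edge colour.

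For the converse, fix a non-empty atomic $\C$-network $N$. Again $N_{M_N}$ and $N$ have the same nodes. Given $\bar z \in {}^n(\nodes N)$, I must show $N_{M_N}(\bar z) = N(\bar z)$. By definition $N_{M_N}(\bar z) = [\alpha]$ where $\alpha : n \to M_N\restriction\{z_0,\dots,z_{n-1}\}$ sends $i \mapsto z_i$; so $[\alpha]$ is determined by the induced coloured subgraph of $M_N$ on the nodes $z_0,\dots,z_{n-1}$, together with the pattern of equalities among the $z_i$. The equalities among the $z_i$ are recorded by $N(\bar z)$ via the constraint $N(\delta^i_j)\le \diag ij$ (and its converse, using that $\C$ is an atom structure coming from coloured graphs), so these match. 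For the colours: the edge and $(n-1)$-tuple colours of $M_N$ on the nodes $z_0, \dots, z_{n-1}$ are by construction exactly the colours appearing in the graph $\alpha'$ where $N(\bar z) = [\alpha']$ — one direction is immediate by taking $\bar z$ itself as the witness in the definition of $M_N$, and the other (that no \emph{spurious} colour is introduced on these nodes from some other hyperedge $\bar z'$) is exactly where the consistency/uniqueness built into atomic networks and into $\sim$ is used. Hence the coloured graph on $\{z_0,\dots,z_{n-1}\}$ induced by $M_N$ coincides with $\alpha'$ up to the labelling of nodes by $0,\dots,n-1$, giving $[\alpha] = [\alpha'] = N(\bar z)$.

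The main obstacle, and the reason the theorem is "tedious and long," is the well-definedness and no-spurious-colour checks: one must confirm that when $M_N(x,y)$ or $M_N(x_0,\dots,x_{n-2})$ is read off from two different hyperedges of $N$, the same colour results, and that the colour read off matches what the specific hyperedge $\bar z$ under consideration dictates. This reduces to the coherence conditions on atomic networks ($N(\delta^i_j)\le\diag ij$, $N(\delta[i\to d])\le\cyl i N(\delta)$) together with the forbidden-triple constraints of the coloured graphs, ensuring that overlapping hyperedges cannot disagree; and to the definition of the equivalence $\sim$, which precisely identifies two elements of ${\sf J}$ when they induce the same coloured graph with the same equality pattern. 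Once these are in place, both identities follow by the routine unwinding sketched above, so I would organise the write-up as: (1) same nodes; (2) well-definedness of $M_N$'s colours from coherence; (3) the two round-trip computations on edges and on $(n-1)$-tuples.
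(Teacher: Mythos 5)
Your proposal is correct and is precisely the direct definitional unwinding that the paper itself intends: the paper offers no written proof, stating only that the claim is ``though tedious and long, easy to check,'' and your write-up supplies exactly that check, correctly locating the only nontrivial points in the well-definedness of the colours of $M_N$ and the exclusion of spurious colours via the network coherence conditions and the equivalence $\sim$. Nothing further is needed.
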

This translation makes the following equivalent formulation of games
played on coloured graphsm, that were originally formulated for networks.

\begin{definition}
The new  game builds a nested sequence $M_0\subseteq M_1\subseteq \ldots $
of coloured graphs.
\pa\ picks a graph $M_0\in \K$ with $M_0$
\pe\  makes no response
to this move. In a subsequent round, let the last graph built be $M_i$.
$\forall$ picks
\begin{itemize}
\item a graph $\Phi\in \K$ with $|\Phi|=n$
\item a single node $k\in \Phi$
\item a coloured graph embedding $\theta:\Phi\sim \{k\}\to M_i$
Let $F=\phi\smallsetminus \{k\}$. Then $F$ is called a face.
\pe\ must respond by amalgamating
$M_i$ and $\Phi$ with the embedding $\theta$. In other words she has to define a
graph $M_{i+1}\in \K$ and embeddings $\lambda:M_i\to M_{i+1}$
$\mu:\phi \to M_{i+1}$, such that $\lambda\circ \theta=\mu\upharpoonright F.$
\end{itemize}
\end{definition}

Now let us consider the possibilities. There may be already a point $z\in M_i$ such that
the map $(k\mapsto z)$ is an isomorphism over $F$.
In this case \pe\ does not need to extend the
graph $M_i$, she can simply let $M_{i+1}=M_i$
$\lambda=Id_{M_i}$, and $\mu\upharpoonright F=Id_F$, $\mu(\alpha)=z$.
Otherwise, without loss of generality,
let $F\subseteq M_i$, $k\notin M_i$.
Let ${M_i}^*$ be the colored graph with nodes $\nodes(M_i)\cup\{k\}$,
whose edges are the combined edges of $M_i$ and $\Phi$,
such that for any $n-1$ tuple $\bar x$ of nodes of
${M_i}^*$, the color ${M_i}^*(\bar x)$ is
\begin{itemize}
\item $M_i(\bar x)$ if the nodes of $x$
all lie in $M$ and $M_i(\bar x)$ is defined
\item $\phi(\bar x)$ if the nodes of $\bar x$ all lie in
$\phi$ and $\phi(\bar x)$ is defined
\item undefined, otherwise.
\end{itemize}
\pe\ has to complete the labeling of $M_i^*$ by adding
all missing edges, colouring each edge $(\beta, k)$
for $\beta\in M_i\sim\Phi$ and then choosing a shade of
yellow for every $n-1$ tuple $\bar a$
of distinct elements of ${M_i}^*$
not wholly contained in $M_i$ nor $\Phi$,
if non of the edges in $\bar a$ is coloured green.
She must do this on such a way that the resulting graph belongs to $\K$.
If she survives each round, \pe\ has won the play.

\section{The main result, its proof and its consequences}

Here we prove our main result. We start by a conditional theorem, that is the soul and heart of our 
proof, a blow up and blur construction.

\begin{theorem}\label{hodkinson}  Let $k\geq 1$ be finite. Assume that there exists a finite 
rainbow cylindric algebra, not in $S\Nr_n\CA_{n+k}$. Then the class $\SNr_n\CA_{n+k}$ is not closed under completions. 
In more detail,  there exists an atom structure $\At$ such that $\Tm\At$ is representable but $\C=\Cm\At\notin 
S\Nr_n\CA_{n+k}$. In particular $\C$ does not have an $n+k$ relativized square representation.
\end{theorem}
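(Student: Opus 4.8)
The theorem is exactly the abstract "blow up and blur" scheme, so I would carry it out in the four stages sketched in the introduction. Start from the hypothesised finite rainbow cylindric algebra $\R$ (a $\CA_n$ on a finite atom structure) with $\R\notin\SNr_n\CA_{n+k}$. Following Andréka's splitting technique and Hodkinson's construction in \cite{Hodkinson}, I would split each red atom $\r_{ij}$ of $\R$ into $\omega$ many copies $\r_{ij}^l$ ($l\in\omega$), leaving the green, white and yellow atoms untouched. Concretely this means enlarging the colour set of the rainbow signature so that reds carry an extra superscript from $\omega$, keeping the forbidden triples of Definition~2.1 \emph{verbatim} (so a triple $(\r_{ij}^{l},\r_{j'k'}^{l'},\r_{i^*k^*}^{l^*})$ is forbidden under exactly the same index condition $i=i^*,\ j=j',\ k'=k^*$, irrespective of the new superscripts), and then building the coloured-graph atom structure $\At$ over this blown-up signature exactly as in \S2. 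Let $\Tm\At$ be the term algebra (subalgebra of $\Cm\At$ generated by the atoms) and $\C=\Cm\At$ its completion, which is the full complex algebra on $\At$.

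**Step 1: $\Tm\At$ is representable.** Here one plays the usual atomic game $G^\omega$ (equivalently the coloured-graph game of Definition~2.7) on $\At$ and exhibits a winning strategy for \pe. The point of splitting each red into infinitely many copies is precisely that whenever \pa\ plays a cone or forces a red edge, \pe\ has a \emph{fresh} red copy available that has never appeared, so no forbidden red triangle of type~(\ref{forb:match}) is ever created; the green/yellow bookkeeping is handled as in Hodkinson's original argument since that part of the signature is unchanged. A \ws\ for \pe\ in $G^\omega$ on a countable atom structure gives, via the step-by-step / \ls\ argument, a complete representation of $\Tm\At$, hence $\Tm\At\in\RCA_n$. (In fact \pe\ wins all the $G^k$'s uniformly, giving the Lyndon conditions, so $\Tm\At$ is even elementarily equivalent to a completely representable algebra — but $\RCA_n$ membership is all we need.)

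**Step 2: $\R$ embeds into $\C=\Cm\At$.** Define $h:\R\to\C$ on atoms by sending each non-red atom to itself (as a singleton in $\At$) and each red atom $\r_{ij}$ to the join $\sum_{l\in\omega}\r_{ij}^l$, the supremum of all its copies; since $\C$ is complete this join exists. One checks $h$ respects the cylindric operations: the diagonals and $E_{ij}$ are untouched, and for the $T_i$ (cylindrifications) the key verification is that the forbidden-triple pattern was copied faithfully, so the "shape" of each red, as seen by the accessibility relations, is unchanged under forgetting superscripts — this makes $h$ a (complete) embedding of $\CA_n$'s. Thus $\R\in\mathbf{S}\,\C$.

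**Conclusion and the main obstacle.** Since $\SNr_n\CA_{n+k}$ is closed under subalgebras (being of the form $\mathbf{S}\mathfrak{Nr}_n$) and $\R\notin\SNr_n\CA_{n+k}$, if $\C$ were in $\SNr_n\CA_{n+k}$ then so would $\R$ be — contradiction. Hence $\C=\Cm\At\notin\SNr_n\CA_{n+k}$, while $\Tm\At\in\RCA_n\subseteq\SNr_n\CA_{n+k}$, so $\SNr_n\CA_{n+k}$ is not closed under completions and not atom-canonical; the last sentence about relativized squares follows from the standard fact that membership in $\SNr_n\CA_{n+k}$ is equivalent to having an $n+k$-dimensional relativized (square/hypergraph) representation (cf.\ \cite{HHbook}). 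The genuinely delicate step is \textbf{Step~1}: one must be sure that the infinitely-many red copies really do neutralise \emph{every} threat \pa\ can pose across all rounds, i.e.\ that the only obstructions in $\K$ to amalgamating a new node are the green-cone/yellow constraints (which split reds do not affect) and the red triangle~(\ref{forb:match}) (which a fresh copy always evades) — verifying this amalgamation/\ws\ claim in full, mirroring Hodkinson's analysis for the blown-up signature, is where the real work lies; everything else is the routine part of the blow-up-and-blur template described in \S1.
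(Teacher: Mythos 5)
Your overall architecture (split the reds, embed the finite algebra into the complex algebra by sending each red to the join of its copies, then use closure of $S\Nr_n\CA_{n+k}$ under $\mathbf{S}$) matches the paper's, and your Step~2 and concluding argument are essentially what the paper does. But Step~1 contains a fatal error, not merely an unverified claim. You assert that \pe\ has a \ws\ in the usual $\omega$-round atomic game $G^\omega$ on $\At$ and deduce that $\Tm\At$ is \emph{completely} representable. If that were true the theorem would be false: a complete representation $f$ of the countable atomic algebra $\Tm\At$ lifts to a representation of $\Cm\At$ via $S\mapsto\bigcup_{a\in S}f(a)$, so $\Cm\At$ would lie in $\RCA_n\subseteq S\Nr_n\CA_{n+k}$, contradicting exactly what you are trying to prove. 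The whole point of a weakly-but-not-strongly representable atom structure is that \pe\ does \emph{not} win $G^\omega$ on $\At$; indeed \pa\ wins even a finite-round, finitely-pebbled game on the underlying finite rainbow algebra (this is how Theorem \ref{main2} works). So representability of $\Tm\At$ cannot come from the atomic game, and your parenthetical fallback to the Lyndon conditions does not rescue the step either, since satisfying them is weaker than the complete representability you actually invoke.

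The paper obtains representability of the term algebra by a different mechanism, taken from Hodkinson: one adds an extra binary colour $\rho$ (a ``shade of red'' \emph{not} in the rainbow signature) which serves as \pe's escape hatch whenever she would be forced into an inconsistent red; with it one builds a countable $n$-homogeneous model $M$ of the rainbow theory, relativizes to the set $W$ of $n$-tuples of $M$ having no $\rho$-labelled edge, and takes $\A$ to be the algebra of $L^n$-definable subsets of $W$ (and $\C$ the $L^n_{\infty\omega}$-definable ones, which is the completion). Representability of $\A=\Tm\At$ is then built in by construction, the atoms being the sets defined by MCA formulas, with the $n$-back-and-forth systems $\Theta^\chi$ guaranteeing atomicity and that the relevant infinite joins $\sum_l\r^l_{ij}$ exist only in $\C$, not in $\A$. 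Your proposal omits $\rho$ and the relativization entirely, and with them the actual source of representability; as written, Step~1 would have to be replaced wholesale rather than ``verified in full, mirroring Hodkinson's analysis''.
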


\begin{proof}

Let $L^+$ be the rainbow signature consisting of the binary
relation symbols $\g_i :i<n-2 , \g_0^i: i<m, \w, \w_i: i <n-2, \r_{jk}^i  (i<\omega, j<k<n)$; 
and the $(n-1)$ ary-relation symbols 
$\y_S: S\subseteq n+2)$, where the $g_0^i: i<m$ and $r_{jk}$, $j<k<n$, are the greens and reds in the finite algebra, respectively.
We denote by $\GG$ the class of models of the rainbow theory formulated in the above rainbow structure. These are coloured graphs.
Consider an additional label, a shade of red that is also a binary relation $\rho$; it is not in the rainbow 
signature though, but can be used to colour edges in coloured graphs.
Our signature differs from Hodkinson's \cite{Hodkinson} in that the greens and the suffices of the reds are finite.
However, like in \cite{Hodkinson}, one can show using the standard rainbow argument that 
there is a countable $n$ homogeneous model  $M\in \GG$ with the following
property:\\
$\bullet$ If $\triangle \subseteq \triangle' \in \GG$, $|\triangle'|
\leq n$, and $\theta : \triangle \rightarrow M$ is an embedding,
then $\theta$ extends to an embedding $\theta' : \triangle'
\rightarrow M$.  
Here $\rho$ plays a key role, \pe\ uses it whenever she is forced a red, and her \ws\ in the $\omega$ rounded usual atomic games,
enables her to build $M$ in a step by step manner.

Now let $W = \{ \bar{a} \in {}^n M : M \models ( \bigwedge_{i < j < n,
l < n} \neg \rho(x_i, x_j))(\bar{a}) \}$.
Here we are discarding assignments who have a $\rho$ labelled edge.
The logics $L^n, L^n_{\infty \omega}$ are taken in the rainbow
signature.

For an $L^n_{\infty \omega}$-formula $\varphi $,  define
$\varphi^W$ to be the set $\{ \bar{a} \in W : M \models_W \varphi
(\bar{a}) \}$.  Then   set $\A$ to be the relativised set algebra with domain
$$\{\varphi^W : \varphi \,\ \textrm {a first-order} \;\ L^n-
\textrm{formula} \}$$  and unit $W$, endowed with the algebraic
operations ${\sf d}_{ij}, {\sf c}_i, $ ect., in the standard way , and of coures formulas are taken in the suitable 
signature. Set $\C$ to be the with domain
$$\{\varphi^W : \varphi \,\ \textrm {an} \;\ L_{\infty, \omega}^n-
\textrm{formula} \}.$$

The $n$-homogeneity built into
$M$, in all three cases by its construction implies that the set of all partial
isomorphisms of $M$ of cardinality at most $n$ forms an
$n$-back-and-forth system. But we can even go
further.  
Let $\chi$ be a permutation of the set $\omega \cup \{ \rho\}$. Let
$ \Gamma, \triangle \in \GG$ have the same size, and let $ \theta :
\Gamma \rightarrow \triangle$ be a bijection. We say that $\theta$
is a $\chi$-\textit{isomorphism} from $\Gamma$ to $\triangle$ if for
each distinct $ x, y \in \Gamma$,
\begin{itemize}
\item If $\Gamma ( x, y) = \r_{jk}^i$,
\begin{equation*}
\triangle( \theta(x),\theta(y)) =
\begin{cases}
\r_{jk}^{\chi(i)}, & \hbox{if $\chi(i) \neq \rho$} \\
\rho,  & \hbox{otherwise.} \end{cases}
\end{equation*}
\end{itemize}

\begin{itemize}
\item If $\Gamma ( x, y) = \rho$, then
\begin{equation*}
\triangle( \theta(x),\theta(y)) \in
\begin{cases}
\r_{jk}^{\chi(\rho)}, & \hbox{if $\chi(\rho) \neq \rho$} \\
\rho,  & \hbox{otherwise.} \end{cases}
\end{equation*}
\end{itemize}

For any permutation $\chi$ of $\omega \cup \{\rho\}$, $\Theta^\chi$
is the set of partial one-to-one maps from $M$ to $M$ of size at
most $n$ that are $\chi$-isomorphisms on their domains. We write
$\Theta$ for $\Theta^{Id_{\omega \cup \{\rho\}}}$.
For any permutation $\chi$ of $\omega \cup \{\rho\}$, $\Theta^\chi$
is an $n$-back-and-forth system on $M$.

Consider now the finite rainbow algebra. We ssumed that it has exactly the same colours as its blown up and blurred version, by forgetting
the superscripts in the reds, ending up with finitely many, each with double indices, satisfying
the normal consistency condition on triples
of reds. Now we move backwards. Split each red $\r_{ij}$ in the colours of the finite algebra, which are finite of course, 
to $\r_{ij}^l$ $l\in \omega$ ($\omega$ copies of $\r_{ij}$, 
together with a  shade of red $\rho$, so we get the above rainbow signature, and we get 
the same $M$, $\A$ and $\C$, and $\C$ is the completion of $\A$.
Every atom in the relativized set algebra $\A$ is uniquely defined by a {\it $MCA$ formula} \cite{Hodkinson}.
For the rainbow signature, a formula $ \alpha$  of  $L^n$ is said to be $MCA$
('maximal conjunction of atomic formulas') if (i) $M \models \exists
x_0\ldots x_{n-1} \alpha $ and (ii) $\alpha$ is of the form
$$\bigwedge_{i \neq j < n} \alpha_{ij}(x_i, x_j) \land \bigwedge\eta_{\mu}(x_0,\ldots x_{n-1}),$$
where for each $i,j,\alpha_{ij}$ is either $x_i=x_i$ or $R(x_i,x_j)$ a binary relation symbol in the rainbow signature, 
and for each $\mu:(n-1)\to n$, $\eta_{\mu}$ is either $y_S(x_{\mu(0),\ldots x_{\mu(n-2)}})$ for some $y_S$ in the signature, 
if for all distinct $i,j<n$, $\alpha_{\mu(i), \mu(j)}$ is not equality nor green, otherwisde it is
$x_0=x_0$.

A formula $\alpha$  being $MCA$ says that the set it defines in ${}^n M$
is nonempty, and that if $M \models \alpha (\bar{a})$ then the graph
$M \upharpoonright rng (\bar{a})$ is determined up to isomorphism
and has no edge whose label is of the form $\rho$. 
Now we have for any permutation $\chi$ of $\omega \cup \{\rho\}$, $\Theta^\chi$
is an $n$-back-and-forth system on $M$.
Hence, any two
tuples (graphs) satisfying $\alpha$ are isomorphic and one is mapped to the
other by the $n$-back-and-forth system $\Theta$ of partial isomorphisms from $M$ to $M$; they are the {\it same} coloured gaph. 

No $L^n_{\infty \omega}$- formula can distinguish any two graphs satisfying an $MCA$ formula.
So $\alpha$
defines an atom of $\A$ --- it is literally indivisible. Since the
$MCA$ - formulas clearly 'cover' $W$, the atoms defined by them are
dense in $\A$. So $\A$ is atomic. There is a one to one correspondence between $MCA$ formulas and 
$n$ coloured graphs whose edges are not labelled by the shade of red 
$\rho$ (up to isomorphism), so that in particular, as we already know, those coloured graphs are the atoms of the algebra.

To show that $S\Nr_n\CA_{n+4}$ is not atom canonical, one embeds the finite rainbow cylindic algebra 
to the complex algebra $\C$. A red graph is a graph that has at least one red edge. 
Every $a:n\to \Gamma$, where $\Gamma$ is red in the small algebra, is mapped to the join of $\phi^W$, 
where $\phi$ is an $MCA$ formula, corresponding 
to $a':n\to \Gamma'$ in $\C$, such that $\Gamma'$ is a red copy of $\Gamma$, meaning that for all $i<j<n$ $(a(i), a(j))\in \r$,
then $(a'(i), a'(j))\in \r^l$, for some $l\in \omega$. These joins exist in the complex algebra, 
because it is complete (viewed otherwise, we we are working in $L_{\infty, \omega}^n$).
But the term algebra, namely $\A$, survives these precarious joins, only finitely many or cofinitely many joins of 
reds exist in $\A$. This is necessary (and indeed in our case sufficient) for $\A$ to be representable. 

Every other coloured graph, involving no reds, is mapped to itself.
This induces an embedding from the finite algebra $\A$ to the complex algebra $\C.$
Therefore the class $\SNr_n\CA_{n+k}$ is 
not closed under completions, for the term algebra will be representable,
but its completion will not be in $S\Nr_n\CA_{n+k}$.

Note that if $a':n\to \Gamma'$ is a red graph in
the big algebra, then there is a unique $a: n\to \Gamma$, $\Gamma$ a red graph in the small algebra such that $a'$ is a copy of $a$.
Let $T_i^{\C}$ be the accessibility relation corresponding to the $i$ the cylindrifier in the complex algebra, and 
$T_i^{s}$, be that corresponding to the $i$ cyindrfier in the small algebra.
Now for any non-red graph $b$ in the complex algebra, and any $i<n$, set  $([a'], [b])\in T_i^{\C}$ iff $([a], [b])\in T_i^{s}$.
The last is well defined. 
If $b$ is a red graph as well, then take its original and
define cylindrfiers the same way. For other non-red graphs, the cylindrifiers are exactly like in the small algebra.
So any red copy is cylindrically equivalent to its original.

\end{proof}

The rest of the paper is devoted to constructing such a finite rainbow algebra, and the game that witnesses
its non neat-embeddability in the required extra dimensions. First we fix some notation. 
Then we devise a usual atomic game that tests neat 
embeddability into extra dimensions. The game is taken from \cite{r} adapted to the cylindric case; the difference 
from usual atomic games is that \pa\  can use only finitely many pebbles, 
and the number of those determine how far the given algebra neatly embeds. 
This game will be used to show that our constructed rainbow algebra is not in 
$S\Nr_n\CA_{n+4}$, witnessed by a \ws\ for \pa\ in a finite rounded game.

We need some notation.
\begin{definition}\label{subs}
Let $n$ be an ordinal. An $s$ word is a finite string of substitutions $({\sf s}_i^j)$,
a $c$ word is a finite string of cylindrifications $({\sf c}_k)$.
An $sc$ word is a finite string of substitutions and cylindrifications
Any $sc$ word $w$ induces a partial map $\hat{w}:n\to n$
by
\begin{itemize}

\item $\hat{\epsilon}=Id$

\item $\widehat{w_j^i}=\hat{w}\circ [i|j]$

\item $\widehat{w{\sf c}_i}= \hat{w}\upharpoonright(n\sim \{i\}$

\end{itemize}
\end{definition}

If $\bar a\in {}^{<n-1}n$, we write ${\sf s}_{\bar a}$, or more frequently
${\sf s}_{a_0\ldots a_{k-1}}$, where $k=|\bar a|$,
for an an arbitrary chosen $sc$ word $w$
such that $\hat{w}=\bar a.$
$w$  exists and does not
depend on $w$ by \cite[definition~5.23 ~lemma 13.29]{HHbook}.
We can, and will assume \cite[Lemma 13.29]{HHbook}
that $w=s{\sf c}_{n-1}{\sf c}_n.$
[In the notation of \cite[definition~5.23,~lemma~13.29]{HHbook},
$\widehat{s_{ijk}}$ for example is the function $n\to n$ taking $0$ to $i,$
$1$ to $j$ and $2$ to $k$, and fixing all $l\in n\setminus\set{i, j,k}$.]
The folowing is the $\CA$ analogue of \cite[lemma~19]{r}.
\begin{lemma}\label{lem:atoms2}
Let $n<m$ and let $\A$ be an atomic $\CA_n$,
$\A\subseteq_c\Nr_n\C$
for some $\C\in\CA_m$.  For all $x\in\C\setminus\set0$ and all $i_0, \ldots i_{n-1} < m$ there is $a\in\At(\A)$ such that
${\sf s}_{i_0\ldots i_{n-1}}a\;.\; x\neq 0$.
\end{lemma}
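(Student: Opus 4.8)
The plan is to exploit the hypothesis $\A \subseteq_c \Nr_n\C$ together with the standard fact that $\C$, being in $\CA_m$ with $m > n$, has enough ``spare'' dimensions to witness that the substitution applied to $x$ can be pushed down to an atom of $\A$. First I would reduce the claim to the case $i_0,\ldots,i_{n-1} < n$ using the $sc$-word machinery of Definition~\ref{subs}: by \cite[Lemma~13.29]{HHbook} the term ${\sf s}_{i_0\ldots i_{n-1}}$ is realized by an $sc$-word of the form $w\,{\sf c}_{n-1}{\sf c}_n$ (or more generally a word only touching indices below $n$ after suitable cylindrifications on the top dimensions), so that ${\sf s}_{i_0\ldots i_{n-1}}y$ for $y \in \Nr_n\C$ actually lands back in the neat reduct; the point is that the operator ${\sf s}_{i_0\ldots i_{n-1}}$, restricted to $\Nr_n\C$, is (essentially) a Boolean endomorphism-like map, and in particular it is completely additive and maps nonzero elements of $\Nr_n\C$ to nonzero elements.

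Next I would argue as follows. Given $x \in \C \setminus \{0\}$, form $y := {\sf c}_0{\sf c}_1\cdots{\sf c}_{n-1}x$; this is a nonzero element of $\C$ and, after the obvious normalization, lies in (or projects into) $\Nr_n\C$ — more carefully one cylindrifies on exactly those coordinates that need to be freed so that the result is an $n$-dimensional element, exactly as in the proof of \cite[Lemma~19]{r}. Since $\At(\A)$ is dense in $\A$ and $\A \subseteq_c \Nr_n\C$ means $\A$ is a \emph{complete} subalgebra of $\Nr_n\C$, the suprema $\sum \At(\A) = 1$ computed in $\A$ is also the supremum in $\Nr_n\C$ (and hence in $\C$, since sums of $n$-dimensional elements agree). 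Therefore $0 \neq y \leq 1 = \sum_{a \in \At(\A)} a$, and because cylindrifications (hence the element $y$ arising from $x$) interact correctly, there must be some atom $a \in \At(\A)$ with $a \cdot y \neq 0$. Unwinding the definition of $y$ and using the commutation laws $\CA$ axioms give for the substitution ${\sf s}_{i_0\ldots i_{n-1}}$, one concludes ${\sf s}_{i_0\ldots i_{n-1}}a \cdot x \neq 0$ for that same atom $a$.

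The key technical step — and the one I expect to be the main obstacle — is the bookkeeping that shows the element obtained from $x$ by the appropriate string of cylindrifications and substitutions really does meet an atom of $\A$ \emph{and} that this can be transferred back through ${\sf s}_{i_0\ldots i_{n-1}}$ to $x$ itself. Concretely, one needs the identity $x \cdot {\sf s}_{i_0\ldots i_{n-1}}a \neq 0 \iff {\sf s}_{j_0\ldots j_{n-1}}x \cdot a \neq 0$ for a suitable dual index tuple $\bar\jmath$, together with the observation that because $m \geq n+1$ (indeed here $m = n+k$ with $k \geq 1$) there are enough dimensions in $\C$ to choose the witnessing $sc$-words without collision. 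This is precisely where the hypothesis $\A \subseteq_c \Nr_n\C$ (completeness of the embedding) is used in an essential way: without it the relevant supremum of atoms need not be preserved and the argument collapses. The remaining manipulations are routine applications of the $\CA$ axioms and the properties of $sc$-words recorded in Definition~\ref{subs} and \cite[Lemma~13.29]{HHbook}.
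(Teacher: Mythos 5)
Your proposal contains the right ingredients --- the complete additivity of the substitution operators and the fact that $\A\subseteq_c\Nr_n\C$ forces $\sum^{\C}\At(\A)=1$ --- but it assembles them in a way that leaves a genuine gap. The paper's argument never touches $x$: it applies the completely additive operator ${\sf s}_{i_0\ldots i_{n-1}}$ (which, acting on elements of $\A\subseteq\Nr_n\C$, may be taken to be a pure string of substitutions, since the cylindrifications ${\sf c}_j$ with $j\geq n$ fix every element of $\A$) to the supremum $\sum\At(\A)=1$, obtaining $\sum\{{\sf s}_{i_0\ldots i_{n-1}}a:a\in\At(\A)\}={\sf s}_{i_0\ldots i_{n-1}}1=1$ computed in $\C$; then for nonzero $x$, if $x\cdot{\sf s}_{i_0\ldots i_{n-1}}a=0$ for every atom $a$, the element $1-x<1$ would be an upper bound for that family, a contradiction. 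That is the whole proof.

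Your route instead moves $x$ down into the neat reduct, finds an atom meeting the projected element, and then tries to transfer the nonzero meet back through ${\sf s}_{i_0\ldots i_{n-1}}$. Two concrete problems arise. First, the projection you write, $y={\sf c}_0\cdots{\sf c}_{n-1}x$, does not land in $\Nr_n\C$; you would need ${\sf c}_n\cdots{\sf c}_{m-1}x$ (you hedge on this, but the hedge matters, because the subsequent duality depends on exactly which cylindrifications were applied). Second, and more seriously, the transfer step rests on an unproved ``identity'' $x\cdot{\sf s}_{i_0\ldots i_{n-1}}a\neq0\iff{\sf s}_{j_0\ldots j_{n-1}}x\cdot a\neq0$. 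For cylindrifications the analogous exchange ($x\cdot{\sf c}_iy\neq0\iff{\sf c}_ix\cdot y\neq0$) is a standard consequence of the axioms, but for a general, possibly non-injective tuple $i_0,\ldots,i_{n-1}<m$ the operator ${\sf s}_{i_0\ldots i_{n-1}}$ is a composition of replacements ${\sf s}^i_j$ whose conjugates involve diagonal elements, and no dual tuple $\bar\jmath$ is exhibited or shown to exist; the atom $a$ witnessing $a\cdot y\neq0$ need not satisfy $x\cdot{\sf s}_{i_0\ldots i_{n-1}}a\neq0$. You flag this yourself as ``the main obstacle,'' and it is precisely the point at which the argument is incomplete. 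The fix is to abandon the back-and-forth entirely and argue as above: complete additivity lets you push ${\sf s}_{i_0\ldots i_{n-1}}$ through the sum of atoms, and then the fact that the resulting family sums to $1$ does the rest.
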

\begin{proof}
We can assume, see definition  \ref{subs},
that ${\sf s}_{i_0,\ldots i_{n-1}}$ consists only of substitutions, since ${\sf c}_{m}\ldots {\sf c}_{m-1}\ldots
{\sf c}_nx=x$
for every $x\in \A$.We have ${\sf s}^i_j$ is a
completely additive operator (any $i, j$), hence ${\sf s}_{i_0,\ldots i_{\mu-1}}$
is too  (see definition~\ref{subs}).
So $\sum\set{{\sf s}_{i_0\ldots i_{n-1}}a:a\in\At(\A)}={\sf s}_{i_0\ldots i_{n-1}}
\sum\At(\A)={\sf s}_{i_0\ldots i_{n-1}}1=1$,
for any $i_0,\ldots i_{n-1}<n$.  Let $x\in\C\setminus\set0$.  It is impossible
that ${\sf s}_{i_0\ldots i_{n-1}}\;.\;x=0$ for all $a\in\At(\A)$ because this would
imply that $1-x$ was an upper bound for $\set{{\sf s}_{i_0\ldots i_{n-1}}a:
a\in\At(\A)}$, contradicting $\sum\set{{\sf s}_{i_0\ldots i_{n-1}}a :a\in\At(\A)}=1$.
\end{proof}

\begin{definition}\label{def:hat}
Let $n<m$. For $\C\in\CA_m$, if $\A\subseteq\Nr_n(\C)$ is an
atomic cylindric algebra and $N$ is an $\A$-network, with $\nodes(N)\subseteq m$,  then we define
$\widehat N\in\C$ by
\[\widehat N =
 \prod_{i_0,\ldots i_{n-1}\in\nodes(N)}{\sf s}_{i_0, \ldots i_{n-1}}N(i_0\ldots i_{n-1})\]
$\widehat N\in\C$ is well defined and depends
implicitly on $\C$.
\end{definition}

For networks $M, N$ and any set $S$, we write $M\equiv^SN$
if $N\restr S=M\restr S$, and we write $M\equiv_SN$ 
if the symmetric difference $\Delta(\nodes(M), \nodes(N))\subseteq S$ and
$M\equiv^{(\nodes(M)\cup\nodes(N))\setminus S}N$. We write $M\equiv_kN$ for
$M\equiv_{\set k}N$. By $\A\subseteq_c \B$, we mean that $\A$ is a complete subalgebra of $\B$, that is if $X\subseteq \A$, and $\sum ^{\A}X=1$, then
$\sum ^{\B}X=1$. 

\begin{lemma}\label{lem:hat}\cite[lemma~26]{r}
Let $n<m$ and let $\A\subseteq_c\Nr_{n}\C$ be an
atomic $\CA_n$
\begin{enumerate}
\item For any $x\in\C\setminus\set0$ and any
finite set $I\subseteq m$ there is a network $N$ such that
$\nodes(N)=I$ and $x\;.\;\widehat N\neq 0$.
\item
For any networks $M, N$ if 
$\widehat M\;.\;\widehat N\neq 0$ then $M\equiv^{\nodes(M)\cap\nodes(N)}N$.
\end{enumerate}
\end{lemma}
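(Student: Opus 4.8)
\textbf{Proof proposal for Lemma~\ref{lem:hat}.}

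The plan is to prove the two parts separately, both by reduction to the combinatorics of $sc$-words and Lemma~\ref{lem:atoms2}, exactly as in \cite[lemma~26]{r} but carried through in the cylindric signature. For part (1), I would proceed by induction on $|I|$. The base case $|I|\le n$ is essentially Lemma~\ref{lem:atoms2}: given $x\ne 0$, pick any enumeration $i_0,\ldots,i_{n-1}$ of a superset of $I$ inside $m$ and use Lemma~\ref{lem:atoms2} to find an atom $a$ with ${\sf s}_{i_0\ldots i_{n-1}}a\cdot x\ne 0$; define $N$ on that tuple accordingly and check the network axioms $N(\delta^i_j)\le{\sf d}_{ij}$ and $N(\delta[i\to d])\le{\sf c}_iN(\delta)$ hold because $\A\subseteq_c\Nr_n\C$ means the atoms of $\A$ already satisfy the diagonal and cylindrifier constraints inside $\C$. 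For the inductive step, suppose $\nodes(N)=I$ with $x\cdot\widehat N\ne 0$ and we wish to add a new node $j\in m\setminus I$. Here the key move is to apply Lemma~\ref{lem:atoms2} repeatedly: for each $n$-tuple $\bar\imath$ of elements of $I\cup\set j$ that involves $j$, we need to choose an atom labelling that tuple so that the conjunction with $x\cdot\widehat N$ stays nonzero. One does this one tuple at a time, at each stage using that $x\cdot\widehat N\cdot(\text{partial new product})$ is a nonzero element of $\C$ and that $\sum\set{{\sf s}_{\bar\imath}a:a\in\At\A}=1$, so some atom keeps the product nonzero; completeness of the embedding (used via $\sum^\C\At\A=1$) is what licenses this. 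Consistency of the resulting labelling — that these choices actually assemble into a genuine network and not merely a consistent partial labelling — follows because any finite set of atomic constraints realized with nonzero product in $\C$ can be read off as a network, the point already implicit in Definition~\ref{def:hat} and the correspondence theorem (Theorem in \S2).

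For part (2), I would argue contrapositively on a single disagreement. Suppose $M\not\equiv^{\nodes(M)\cap\nodes(N)}N$, so there is a tuple $\bar\imath\in{}^n(\nodes(M)\cap\nodes(N))$ with $M(\bar\imath)\ne N(\bar\imath)$. Since these are atoms of an atomic algebra, $M(\bar\imath)\cdot N(\bar\imath)=0$ in $\A$, hence in $\C$. Now ${\sf s}_{\bar\imath}$ is a Boolean endomorphism (it is ${\sf s}^{i_0}_0\cdots$ composed appropriately, and each ${\sf s}^i_j$ is additive and multiplicative on the relevant elements — more carefully, ${\sf s}_{\bar\imath}$ restricted to $\Nr_n\C$ behaves multiplicatively because the substitution operators distribute over meet), so ${\sf s}_{\bar\imath}M(\bar\imath)\cdot{\sf s}_{\bar\imath}N(\bar\imath)={\sf s}_{\bar\imath}(M(\bar\imath)\cdot N(\bar\imath))={\sf s}_{\bar\imath}0=0$. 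But ${\sf s}_{\bar\imath}M(\bar\imath)$ is one of the factors in the product defining $\widehat M$ and likewise ${\sf s}_{\bar\imath}N(\bar\imath)$ is a factor of $\widehat N$, so $\widehat M\cdot\widehat N\le{\sf s}_{\bar\imath}M(\bar\imath)\cdot{\sf s}_{\bar\imath}N(\bar\imath)=0$, contradicting $\widehat M\cdot\widehat N\ne 0$. This direction is short once the multiplicativity of ${\sf s}_{\bar\imath}$ is pinned down.

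The main obstacle I anticipate is entirely in part (1): verifying that the greedy, tuple-by-tuple choice of atoms for the new node $j$ produces something satisfying \emph{both} network axioms simultaneously, and that the choices made for different tuples do not conflict. The subtlety is that a network is not just any nonzero-product labelling; one must check that the diagonal condition and the cylindrifier-domination condition are automatically inherited. I would handle this by noting that for atoms $a$ of $\A\subseteq_c\Nr_n\C$, the inequalities ${\sf s}^i_j a\le{\sf d}_{ij}$-type relations and the relation between ${\sf s}_{\bar\imath}$ and cylindrifications are already theorems of $\CA_n$ applied inside $\C$ (using that $\A$ consists of $n$-dimensional elements), so any labelling assembled from genuine atoms with globally nonzero product is forced to be a network — this is the content one imports from \cite[Lemma 13.29]{HHbook} and the $sc$-word calculus of Definition~\ref{subs}. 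Modulo citing those two facts, the rest is the bookkeeping that the paper has already declared ``tedious and long, easy to check.''
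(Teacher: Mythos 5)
Your proposal is correct and follows essentially the same route as the paper: part (1) is the same greedy labelling of hyperedges one at a time via repeated application of Lemma~\ref{lem:atoms2}, and part (2) is the same contrapositive argument using that a disagreement on a common tuple forces ${\sf s}_{\bar c}M(\bar c)\cdot{\sf s}_{\bar c}N(\bar c)=0\geq\widehat M\cdot\widehat N$. The only difference is that you spell out the verification that the resulting labelling is a network and that ${\sf s}_{\bar c}$ is multiplicative on $n$-dimensional elements (it reduces to a string of substitutions, as in the proof of Lemma~\ref{lem:atoms2}), points the paper dismisses as easily checked.
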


\begin{proof}
The proof of the first part is based on repeated use of
lemma ~\ref{lem:atoms2}. We define the edge labelling of $N$ one edge
at a time. Initially no hyperedges are labelled.  Suppose
$E\subseteq\nodes(N)\times\nodes(N)\ldots  \times\nodes(N)$ is the set of labelled hyper
edges of $N$ (initially $E=\emptyset$) and 
$x\;.\;\prod_{\bar c \in E}{\sf s}_{\bar c}N(\bar c)\neq 0$.  Pick $\bar d$ such that $\bar d\not\in E$.  
By lemma~\ref{lem:atoms2} there is $a\in\At(\c A)$ such that
$x\;.\;\prod_{\bar c\in E}{\sf s}_{\bar c}N(\bar c)\;.\;{\sf s}_{\bar d}a\neq 0$.  
Include the edge $\bar d$ in $E$.  Eventually, all edges will be
labelled, so we obtain a completely labelled graph $N$ with $\widehat
N\neq 0$.  
it is easily checked that $N$ is a network.
For the second part, if it is not true that
$M\equiv^{\nodes(M)\cap\nodes(N)}N$ then there are is 
$\bar c \in^{n-1}\nodes(M)\cap\nodes(N)$ such that $M(\bar c )\neq N(\bar c)$.  
Since edges are labelled by atoms we have $M(\bar c)\cdot N(\bar c)=0,$ 
so $0={\sf s}_{\bar c}0={\sf s}_{\bar c}M(\bar c)\;.\; {\sf s}_{\bar c}N(\bar c)\geq \widehat M\;.\;\widehat N$.
\end{proof}

\begin{lemma}\label{lem:khat}  
Let $m>n$. Let $\C\in\CA_m$ and let $\A\subseteq_c\Nr_{n}(\C)$ be atomic.
Let $N$ be a network over $\c A$ and $i,j <n$.
\begin{enumerate}
\item\label{it:-i}
If $i\not\in\nodes(N)$ then ${\sf c}_i\widehat N=\widehat N$.

\item \label{it:-j} $\widehat{N Id_{-j}}\geq \widehat N$.

\item\label{it:ij} If $i\not\in\nodes(N)$ and $j\in\nodes(N)$ then
$\widehat N\neq 0 \rightarrow \widehat{N[i/j]}\neq 0$.
where $N[i/j]=N\circ [i|j]$

\item\label{it:theta} If $\theta$ is any partial, finite map $n\to n$
and if $\nodes(N)$ is a proper subset of $n$,
then $\widehat N\neq 0\rightarrow \widehat{N\theta}\neq 0$.
\end{enumerate}
\end{lemma}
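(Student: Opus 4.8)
The plan is to treat the four parts essentially independently, each reducing to the definition of $\widehat N$ (Definition \ref{def:hat}) and properties of the operations $\cyl i$, $\sub ij$, $\sub{i_0\ldots i_{n-1}}{}$ in $\C$, exploiting throughout that $\A\subseteq_c\Nr_n\C$ so that atoms of $\A$ sum to $1$ in $\C$ and that the $sc$-words behave as recorded in Definition \ref{subs}. For (\ref{it:-i}), if $i\notin\nodes(N)$ then the index $i$ never occurs among the $i_0,\ldots,i_{n-1}$ in the product defining $\widehat N$; since each factor $\sub{i_0\ldots i_{n-1}}N(i_0\ldots i_{n-1})$ is obtained by an $sc$-word whose induced map has range inside $\nodes(N)$, each factor is fixed by $\cyl i$ (using the commutation laws $\cyl i\sub jk=\sub jk\cyl i$ for $i\notin\{j,k\}$ and $\cyl i$ being idempotent), and $\cyl i$ distributes over the finite product, giving $\cyl i\widehat N=\widehat N$. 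First I would isolate this commutation fact as the engine for (\ref{it:-i}).

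For (\ref{it:-j}), $N\,Id_{-j}$ denotes $N$ composed with the partial identity defined off $j$; the claim $\widehat{N\,Id_{-j}}\ge\widehat N$ follows because passing to $Id_{-j}$ only deletes hyperedges mentioning $j$ (equivalently, it is a relativization of the index set), so the defining product for $\widehat{N\,Id_{-j}}$ is a subproduct of that for $\widehat N$, hence larger. For (\ref{it:ij}), with $i\notin\nodes(N)$, $j\in\nodes(N)$: here the standard move is $\widehat{N[i/j]}\ge\sub ij\widehat N$ — relabelling node $j$ to $i$ in the network corresponds to applying the substitution $\sub ij$ to $\widehat N$, again by the behaviour of $sc$-words under composition with $[i|j]$ — and then $\sub ij$ is a Boolean endomorphism (in $\C$) that maps nonzero elements to nonzero elements because it has the complementary substitution as a quasi-inverse via $\cyl i\sub ij x = \cyl i x$ and $i\notin\nodes(N)$ means $\cyl i\widehat N=\widehat N\neq0$ by part (\ref{it:-i}). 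So $\widehat N\neq0\Rightarrow\sub ij\widehat N\neq0\Rightarrow\widehat{N[i/j]}\neq0$.

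For (\ref{it:theta}), the point is that an arbitrary finite partial map $\theta:n\to n$ factors as a composite of transpositions/replacements of the form $[i|j]$, and — crucially — since $\nodes(N)$ is a \emph{proper} subset of $n$ there is always a spare index outside $\nodes(N)\cup\theta(\nodes(N))$ to route through, so each elementary step is an instance of (\ref{it:ij}) (first move node $j$ out to a fresh index, then in to its target). Iterating along a decomposition of $\theta$ and invoking (\ref{it:ij}) finitely many times preserves nonzeroness, giving $\widehat{N\theta}\neq0$. I expect the main obstacle to be part (\ref{it:theta}): one must be careful that at each elementary step the relevant hypotheses of (\ref{it:ij}) ($i\notin\nodes$, $j\in\nodes$, and more subtly that after several steps the running network still has a proper subset of $n$ as its node set so a fresh index remains available) genuinely hold, which is exactly where the properness assumption is used and where a sloppy induction could break; making the decomposition of $\theta$ explicit enough to verify this — rather than the individual algebraic identities, which are routine consequences of Definition \ref{subs} and \cite[Lemma 13.29]{HHbook} — is the delicate point.
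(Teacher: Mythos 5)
Your outline of parts (\ref{it:-i}) and (\ref{it:-j}) matches the paper's (which dismisses them as easy/by definition), and your part (\ref{it:theta}) is essentially the paper's decomposition argument, except that the factorization of $\theta$ must also use restrictions $Id_{-i}$ (handled by part (\ref{it:-j})), not only replacements $[i/j]$ — the paper explicitly factors $\theta=\sigma_0\cdots\sigma_t$ with each $\sigma_s$ of one of these two forms and then cites parts (\ref{it:-j}) and (\ref{it:ij}). Also, in part (\ref{it:-i}), $\cyl i$ does not distribute over products in general; the step is valid only because each factor $\sub{\bar c}{}N(\bar c)$ is $\cyl i$-closed, so one uses $\cyl i(x\cdot\cyl i y)=\cyl i x\cdot\cyl i y$ repeatedly. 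These are repairable.

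The genuine problem is the central inequality you use for part (\ref{it:ij}): $\widehat{N[i/j]}\geq\sub ij\widehat N$ is false — in fact the opposite holds. Since $i\notin\nodes(N)$, part (\ref{it:-i}) gives $\cyl i\widehat N=\widehat N$, whence $\sub ij\widehat N=\cyl i(\diag ij\cdot\cyl i\widehat N)=\cyl i\diag ij\cdot\cyl i\widehat N=\widehat N$. On the other hand $N[i/j]\supseteq N$, so the product defining $\widehat{N[i/j]}$ has strictly more factors than that defining $\widehat N$ (all tuples through the new node $i$), giving $\widehat{N[i/j]}\leq\widehat N=\sub ij\widehat N$, with equality failing in general (e.g.\ the factor indexed by $(i,j,\ldots,j)$ forces $\widehat{N[i/j]}\leq\diag ij\cdot(\cdots)$). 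So your argument, as written, proves nothing about $\widehat{N[i/j]}$; and the auxiliary claim that $\sub ij$ "maps nonzero elements to nonzero elements" is also false in general (e.g.\ $\sub ij(-\diag ij)=0$), though harmless here since $\sub ij\widehat N=\widehat N$. The correct direct route, close in spirit to yours, is: from $\cyl i\widehat N=\widehat N\neq 0$ deduce $\diag ij\cdot\widehat N\neq 0$, and then show $\widehat{N[i/j]}\geq\diag ij\cdot\widehat N$ via the substitution identity $\diag ij\cdot\sub{\bar c[i\mapsto j]}{}a\leq\sub{\bar c}{}a$ applied factor by factor. The paper instead argues less computationally: from $\widehat N\cdot\diag ij\neq 0$ it invokes Lemma \ref{lem:hat}(1) to produce a network $M$ with $\nodes(M)=\nodes(N)\cup\{i\}$ and $\widehat M\cdot\widehat N\cdot\diag ij\neq 0$, then uses Lemma \ref{lem:hat}(2) together with $M(i,j)\leq\diag ij$ to conclude $M=N[i/j]$, so $\widehat{N[i/j]}=\widehat M\neq 0$. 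Either repair works, but the step you wrote down is the one that fails, and part (\ref{it:theta}) collapses with it.
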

\begin{proof}
The first part is easy.
The second
part is by definition of $\;\widehat{\;}$. For the third part suppose
$\widehat N\neq 0$.  Since $i\not\in\nodes(N)$, by part~\ref{it:-i},
we have ${\sf c}_i\widehat N=\widehat N$.  By cylindric algebra axioms it
follows that $\widehat N\;.\;{\sf d}_{ij}\neq 0$.  By lemma~\ref{lem:hat}
there is a network $M$ where $\nodes(M)=\nodes(N)\cup\set i$ such that
$\widehat M\;.\widehat N\;.\;d_{ij}\neq 0$.  By lemma~\ref{lem:hat} we
have $M\supseteq N$ and $M(i, j)\leq {\sf d}_{ij}$.  It follows that $M=N[i/j]$.
Hence $\widehat{N[i/j]}\neq 0$.
For the final part 
(cf. \cite[lemma~13.29]{HHbook}), since there is 
$k\in n\setminus\nodes(N)$, \/ $\theta$ can be
expressed as a product $\sigma_0\sigma_1\ldots\sigma_t$ of maps such
that, for $s\leq t$, we have either $\sigma_s=Id_{-i}$ for some $i<n$
or $\sigma_s=[i/j]$ for some $i, j<n$ and where
$i\not\in\nodes(N\sigma_0\ldots\sigma_{s-1})$.
Now apply parts~\ref{it:-j} and \ref{it:ij} of the lemma.
\end{proof}

Let $F^m$ be the usual atomic $\omega$ rounded game on networks, 
except that the nodes used are $m$ and \pa\ can re use nodes. For an atom structure $\alpha$, we write 
$F^m(\alpha)$ for the game $F^m$ played on $\alpha$. 

The next theorem is the cylindric algebra analogue of theorem 29 (the implication $(3)\implies (4)$) in \cite{r}.

\begin{theorem}\label{thm:n}
Let $n<m$, and let $\A$ be an atomic $\CA_m$
If $\A\in{\bf S_c}\Nr_{n}\CA_m, $
then \pe\ has a \ws\ in $F^m(\At\A)$. In particular, if $\A$ is countable and completely representable, then \pe\ has a \ws\ in $F^{\omega}(\At\A).$
In the latter case since $F^{\omega}(\At\A)$ is equivalent to the usual atomic rounded game on networks,
the converse is also true.
\end{theorem}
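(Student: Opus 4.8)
The plan is to mimic the proof of the corresponding implication in \cite{r}, translating from relation algebras to cylindric algebras using the machinery built up in Lemmas~\ref{lem:atoms2}, \ref{lem:hat}, and \ref{lem:khat}. Suppose $\A\in {\bf S}_c\Nr_n\CA_m$; fix $\C\in\CA_m$ with $\A\subseteq_c\Nr_n\C$. The idea is that \pe\ maintains, as an inductive hypothesis during the play of $F^m(\At\A)$, that for the network $N_r$ built after round $r$ we have $\widehat{N_r}\neq 0$ (computed in $\C$, as in Definition~\ref{def:hat}), where $\nodes(N_r)\subseteq m$. The key engine is Lemma~\ref{lem:hat}: part (1) lets \pe\ \emph{extend} a nonzero-$\widehat{\phantom{N}}$ network to any prescribed finite set of nodes while keeping $\widehat{\phantom{N}}\neq 0$, and part (2) guarantees that any two networks with compatible nonzero $\widehat{\phantom{N}}$ agree on common nodes, which is exactly what is needed to check that \pe's responses are genuine networks and are consistent with what was played before.

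The steps, in order. First, the initial move: \pa\ picks an atom $a$; \pe\ sets $N_0$ to be the network on nodes $\subseteq m$ (say $\{0,\dots,n-1\}$) with that atom on the relevant hyperedge, and $\widehat{N_0}=\sub{0}{0}\cdots\; a = a\neq 0$ after the routine reduction in the proof of Lemma~\ref{lem:atoms2}. Second, a cylindrifier move: \pa\ picks a node $k\in m$, a hyperedge $\bar z$ of $N_r$, an index $l<n$, and an atom $b\leq \cyl l (N_r(\bar z))$ (in the coloured-graph reformulation this is the ``cone/face'' move). \pe\ must produce $N_{r+1}\supseteq$ the relevant restriction of $N_r$ with $N_{r+1}(\dots)=b$ on the new hyperedge through $k$. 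Here one invokes Lemma~\ref{lem:khat}: parts \ref{it:-i} and \ref{it:ij} (together with the cylindric axioms relating $\cyl i$, $\diag{ij}$, and substitutions) show $\widehat{N_r}\cdot \cyl k(\cdots)\neq 0$ forces the existence of a network on $\nodes(N_r)\cup\{k\}$ refining this product, and Lemma~\ref{lem:hat}(2) shows it is unique on the overlap, hence a legitimate response; \pe\ takes $N_{r+1}$ to be this network (restricted back down if \pa\ is reusing a node $k\in\nodes(N_r)$, using part~\ref{it:ij} to relabel). That $N_{r+1}$ is an $\A$-network — i.e.\ the labels land in $\At\A$, not merely in $\At\Nr_n\C$ — uses $\A\subseteq_c\Nr_n\C$ and Lemma~\ref{lem:atoms2}, which is precisely why we need the \emph{complete} subalgebra hypothesis (hence ${\bf S}_c$ rather than ${\bf S}$). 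Third, since each round preserves $\widehat{N}\neq 0$ and \pe\ always has a legal move, she wins the $\omega$-rounded play, establishing the first assertion.

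For the ``in particular'' clause: if $\A$ is countable and completely representable, then by the Hirsch--Hodkinson characterisation $\A\in {\bf S}_c\Nr_n\CA_\omega$, so the same argument with $m=\omega$ (now \pa\ has unboundedly many pebbles, but each individual network is still finite, so all the finitary lemmas apply verbatim) gives \pe\ a \ws\ in $F^\omega(\At\A)$. For the converse in that case, one notes that $F^\omega(\At\A)$ with $\omega$ available nodes is \emph{equivalent} to the ordinary atomic game $G^\omega$ on networks played on $\At\A$ — reusing nodes confers no advantage on \pa\ when there are infinitely many available — and a \ws\ for \pe\ in $G^\omega$ on a countable atomic algebra yields, via the usual step-by-step union-of-networks construction, a complete representation of $\A$.

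The main obstacle I expect is the cylindrifier step: verifying carefully that \pe's amalgamated network is well-defined as a \emph{network over $\A$} (all hyperedge labels being atoms of $\A$, with the $\diag{ij}$ and $\cyl i$ network conditions intact), that it genuinely extends $N_r$ when $\bar z$ and $k$ overlap old nodes, and that the bookkeeping with $sc$-words $\sub{i_0\dots i_{n-1}}{}$ (Definition~\ref{subs}) is consistent — all of this is where Lemma~\ref{lem:khat} and the complete-additivity of substitutions from Lemma~\ref{lem:atoms2} do the real work, and where the translation from the relation-algebra proof in \cite{r} is least mechanical.
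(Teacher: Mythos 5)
Your proposal is correct and follows essentially the same route as the paper: \pe\ maintains the invariant $\widehat{N}\neq 0$ in $\C$, using Lemma~\ref{lem:hat}(1) to extend networks and Lemma~\ref{lem:khat} to handle the cylindrifier move, with the initial round handled exactly as you describe. The only cosmetic difference is in the ``in particular'' clause, where the paper cites the known result that countable atomic algebras in ${\bf S}_c\Nr_n\CA_\omega$ are completely representable rather than re-running the step-by-step construction, but these amount to the same argument.
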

\begin{proof}
We use lemmata \ref{lem:hat}, \ref{lem:khat}. For the first part, if $\A\subseteq\Nr_n\C$ for some $\C\in\CA_m$ then \pe\ always
plays networks $N$ with $\nodes(N)\subseteq n$ such that
$\widehat N\neq 0$. In more detail, in the initial round, let \pa\ play $a\in \At \cal A$.
\pe\ plays a network $N$ with $N(0, \ldots n-1)=a$. Then $\widehat N=a\neq 0$.
At a later stage suppose \pa\ plays the usual cylindrifier move
$(N, \langle f_0, \ldots f_{n-2}\rangle, k, b, l)$
by picking a
previously played network $N$, $f_i\in \nodes(N), \;l<n,  k\notin \{f_i: i<n-2\}$,
and $b\leq {\sf c}_lN(f_0,\ldots  f_{l-1}, x, f_{l+1}, \ldots f_{n-2})$.
Let $\bar a=\langle f_0\ldots f_{l-1}, k\ f_{l+1}, \ldots f_{n-2}\rangle.$
Then by the previous lemma, ${\sf c}_k\widehat N\cdot {\sf s}_{\bar a}b\neq 0$,
hence by lemma \ref{lem:khat}, there is a network  $M$ such that
$\widehat{M}.\widehat{{\sf c}_kN}\cdot {\sf s}_{\bar a}b\neq 0$. Hence
$M(f_0,\dots f_{l-1}, k, f_{l+1}, \ldots f_{n-2})=b,$ 
and $M$ is the required response.

For the second part, we have from the first part, 
that $\A\in S_c\Nr_n\CA_{\omega}$, but countable atomic algebras in the latter class are completely representable \cite{Sayed}; 
the result now follows.
\end{proof}

Recal that we denote the rainbow algebra $R(\Gamma)$ in \cite{HHbook2}, by $\CA_{{\sf G}, \Gamma}$, where ${\sf G}$ is the set of greens that 
we allow to vary. $\CA_{n+2,n+1}$ denotes the finite rainbow cylindric algebra,  
based on the complete irrefexive graphs with underlying sets $n+2$, the greens,  and $n+1$ the reds.
Viewed as a pebble game, in each round $0,1\ldots n+2$ \pe\ places a  new pebble  on  element of $n+2$.
The edges relation in $n+1$ is irreflexive so to avoid losing
\pe\ must respond by placing the other  pebble of the pair on an unused element of $n+1$.
After $n+1$ rounds there will be no such element,
and she loss in the next round. 
Hence \pa\ can win the graph game using $n+4$ pebbles.

\begin{theorem}\label{main2} The rainbow algebra $\A=\CA_{n+2, n+1}$ is not in $S\Nr_n\CA_{n+4}$, hence by theorem \ref{hodkinson},
the latter class is not atom-canonical, because $\A$ can be blown up and blurred to 
give an algebra that is representable, but its completion is not
in $S\Nr_n\CA_{n+4}$.

\end{theorem}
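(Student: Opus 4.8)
The plan is to combine the conditional Theorem~\ref{hodkinson} with the concrete rainbow algebra $\A=\CA_{n+2,n+1}$, so the only thing left to verify is that $\A\notin S\Nr_n\CA_{n+4}$; once that is in hand, Theorem~\ref{hodkinson} applied with $k=4$ produces an atom structure $\At$ with $\Tm\At$ representable but $\Cm\At\notin S\Nr_n\CA_{n+4}$, which is exactly non-atom-canonicity of $S\Nr_n\CA_{n+4}$. For $k>4$ one notes $S\Nr_n\CA_{n+k}\subseteq S\Nr_n\CA_{n+4}$, so the same $\Cm\At$ witnesses the failure (one has to be a little careful that $\Tm\At$ is still representable — which it is, since representability does not depend on $k$ — so strictly the statement for all $k>3$ follows from the single bad algebra in dimension $n+4$). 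Thus the mathematical content is concentrated in the non-neat-embeddability claim.

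To prove $\A=\CA_{n+2,n+1}\notin S\Nr_n\CA_{n+4}$, I would argue by contrapositive through Theorem~\ref{thm:n}: if $\A\in S\Nr_n\CA_{n+4}\subseteq {\bf S}_c\Nr_n\CA_{n+4}$ (using that $\A$ is finite, hence $\A\subseteq_c\Nr_n\C$ whenever $\A\subseteq\Nr_n\C$), then \pe\ would have a winning strategy in $F^{n+4}(\At\A)$, the atomic game with $\omega$ rounds in which \pa\ may reuse the $n+4$ available nodes. So it suffices to exhibit a winning strategy for \pa\ in $F^{n+4}(\At\A)$ in finitely many rounds. Here I follow the rainbow/pebble-game analysis sketched just before the theorem: \pa\ plays the standard cone-and-reds strategy forcing \pe\ to label edges among $n+4$ nodes by reds drawn from the index set $n+1$, subject to the matching/consistency condition \eqref{forb:match} on red triangles. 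Because the reds are indexed by the irreflexive complete graph on $n+1$ and \pa\ can, by playing suitable $i$-cones with apexes on fresh nodes, force \pe\ to produce an embedding of the complete irreflexive graph on $n+2$ into the one on $n+1$, \pe\ runs out of available red indices after at most $n+1$ such rounds and is forced into a forbidden red triangle (or a forbidden green triple), losing. Counting pebbles: \pa\ needs a face of size $n-1$ plus an apex, reused across rounds, and the graph-game translation shows $n+4$ pebbles suffice for \pa; this is precisely the "graph game using $n+4$ pebbles" remark in the paragraph preceding the theorem statement.

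The key steps, in order, are: (1) reduce to showing \pa\ wins $F^{n+4}(\At\A)$, via Theorem~\ref{thm:n} together with finiteness of $\A$ turning $S\Nr_n$ into ${\bf S}_c\Nr_n$; (2) describe \pa's strategy in $F^{n+4}$ in terms of the coloured-graph game of the last definition in Section~2 — \pa\ repeatedly picks a graph $\Phi$ of size $n$ that is an $i$-cone, with $k$ its apex and $\theta$ embedding the base face into the current $M_i$, thereby forcing \pe\ to colour a new green cone and, by the forbidden triples \eqref{forb:pim} and \eqref{forb:match}, to place new red labels consistently; (3) verify that after $n+1$ such forcing rounds the reds available to \pe\ on the $\le n+4$ active nodes are exhausted, so the next round forces a graph not in $\K$; (4) conclude via Theorems~\ref{thm:n} and~\ref{hodkinson}. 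The main obstacle — and the part deserving the most care — is step (2)–(3): making precise exactly which cones \pa\ plays and on which reused nodes, and checking that the bookkeeping of red indices really does force a contradiction after $n+1$ rounds rather than letting \pe\ recycle an old red label to escape. This is the standard but delicate rainbow "$\pa$ wins with $n+4$ pebbles" computation (the $\CA$ analogue of Lemmas~17.32–17.36 of \cite{HHbook} and of the relation-algebra argument in \cite{r}), and it is where all the real work lies; everything else is a citation of results already set up in the excerpt.
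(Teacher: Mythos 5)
Your proposal follows essentially the same route as the paper: reduce via Theorem~\ref{thm:n} (using finiteness of $\A$ so that $\A=\A^+\in {\bf S}_c\Nr_n\CA_{n+4}$ would follow) to exhibiting a winning strategy for \pa\ in the $(n+4)$-pebble game, and then win that game by the standard rainbow cone strategy forcing \pe\ onto a red clique of size $n+2$ that cannot be consistently labelled with reds indexed by $n+1$. The paper's proof is no more detailed than yours on the combinatorial bookkeeping of the cone strategy, and your remarks on the $k>4$ case and on $G^{n+4}$ versus $F^{n+4}$ are consistent with what the paper does.
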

\begin{proof} 

We play the usual atomic game $G^{n+4}$ on $\CA_{n+2, n+1}$, with $n+4$ rounds. 
The usual argument, or rather strategy,  is adopted.  \pa\ forces a win on a red clique using his excess of greens by bombarding \pe\ 
with $\alpha$ cones having the same base, where $\alpha<n+2$, the tints of the cones, 
are the superscripts of the greens.   

In his zeroth move, \pa\ plays a graph $\Gamma$ with
nodes $0, 1,\ldots, n-1$ and such that $\Gamma(i, j) = \w_0 (i < j <
n-1), \Gamma(i, n-1) = \g_i ( i = 1,\ldots, n-2), \Gamma(0, n-1) =
\g^0_0$, and $ \Gamma(0, 1,\ldots, n-2) = \y_{n+2}$. This is a $0$-cone
with base $\{0,\ldots , n-2\}$. In the following moves, \pa\
repeatedly chooses the face $(0, 1,\ldots, n-2)$ and demands a node 
$\alpha$ with $\Phi(i,\alpha) = \g_i (i = 0,\ldots,  1-2)$, $(i=1,\ldots n-2)$ and $\Phi(0, \alpha) = \g^\alpha_0$,
in the graph notation -- i.e., an $\alpha$-cone, $\alpha<n+2$,  on the same base.
\pe\ among other things, has to colour all the edges
connecting new nodes created by \pa\ as appexes of cones based on the face $(0,1,\ldots n-2)$. By the rules of the game
the only permissible colours would be red. Using this, \pa\ can force a
win, for after $n+4$ rounds \pe\ is forced  reds whose indices must match.
Now assume for contradiction that $\A\in S\Nr_n\CA_{n+4}$. 
Then $\A^+=\A\in S_c\Nr_n\CA_{n+4}$, 
hence \pe\ can win the  game $F^{n+4}$ by theorem \ref{thm:n}, so clearly she can also win the game 
$G^{n+4}$ which is impossible.
\end{proof}

From theorems \ref{hodkinson} and \ref{main2}, we readily infer:
\begin{corollary}The following hold  for finite $n\geq 3$:

\begin{enumarab}
\item There exist two atomic
cylindric algebras of dimension $n$  with the same atom structure, only one of which is
representable, the other is not in $S\Nr_n\CA_{n+4}$.

\item  For all $k\geq 4$,  $S\Nr_n\CA_{n+k}$
is not closed under completions and is not atom-canonical. In particular, $\RCA_n$ is not atom-canonical.

\item There exists a non-representable $\CA_n$, that is not even in $S\Nr_n\CA_{n+4}$, with a dense representable
subalgebra.

\item $S\Nr_n\CA_{n+k}$
is not Sahlqvist axiomatizable for every $k\geq 4$. In particular, $\RCA_n$ is not Sahlqvist axiomatizable.

\item There exists an atomic representable $\CA_n$ with no relativized complete $n+k$ square
representation.

\end{enumarab}
\end{corollary}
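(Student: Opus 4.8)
The final statement to prove is the Corollary, which collects five consequences of Theorems~\ref{hodkinson} and \ref{main2}. My plan is to derive each item essentially as a formal consequence of the two main theorems together with standard facts from algebraic logic, so the bulk of the work is bookkeeping rather than new construction.

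\textbf{Items (1)--(3): unpacking the blow-up-and-blur.} For item~(1), I would take $\At$ to be the atom structure produced in Theorem~\ref{hodkinson} applied to the finite rainbow algebra $\CA_{n+2,n+1}$ of Theorem~\ref{main2} with $k=4$. Then $\Tm\At$ is representable, while $\Cm\At \notin S\Nr_n\CA_{n+4}$; since $\Tm\At$ and $\Cm\At$ share the atom structure $\At$, these are the two required algebras. For item~(2), closure-under-completions fails for $k=4$ by exactly the same pair $(\Tm\At, \Cm\At)$; for $k>4$ I would note $S\Nr_n\CA_{n+k}\subseteq S\Nr_n\CA_{n+4}$, so $\Cm\At\notin S\Nr_n\CA_{n+k}$ a fortiori, and $\Tm\At\in\RCA_n\subseteq S\Nr_n\CA_{n+k}$, giving the failure for every $k\geq 4$; atom-canonicity fails because an atom-canonical variety of BAOs is automatically closed under completions of its atomic members, and $\RCA_n=S\Nr_n\CA_\omega\subseteq S\Nr_n\CA_{n+k}$ handles the last sentence. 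For item~(3), take $\C=\Cm\At$: it is not in $S\Nr_n\CA_{n+4}$ (so certainly not representable), and $\Tm\At$ is a dense subalgebra of $\C$ (the term algebra is always dense in its completion) which is representable.

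\textbf{Item (4): non-finite-axiomatizability by Sahlqvist equations.} Here I would invoke the standard principle (from the theory of canonical varieties / the correspondence between Sahlqvist equations and elementary, canonical, completion-closed classes, as in \cite{HHbook}) that any variety axiomatized by Sahlqvist equations is closed under Dedekind--MacNeille completions of its atomic algebras. Since item~(2) shows $S\Nr_n\CA_{n+k}$ is \emph{not} closed under such completions for $k\geq 4$, it cannot be Sahlqvist axiomatizable; the same applies to $\RCA_n$ because $\RCA_n$ fails to be atom-canonical (indeed one can take the very same witness, since $\Tm\At\in\RCA_n$ while $\Cm\At\notin\RCA_n$).

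\textbf{Item (5): no relativized complete square representation.} I would argue that if an atomic $\A\in\RCA_n$ had a complete $n+k$ square (i.e.\ $(n+k)$-flat or $(n+k)$-square relativized) representation, then by the relativized-representation machinery one gets $\Cm\At\A\in S\Nr_n\CA_{n+k}$ — this is the "in particular" clause already recorded in the statement of Theorem~\ref{hodkinson}. Taking $\A=\Tm\At$ (with $k=4$), this would force $\Cm\At\in S\Nr_n\CA_{n+4}$, contradicting Theorem~\ref{main2}. Hence $\Tm\At$ is an atomic representable $\CA_n$ with no complete relativized $(n+k)$-square representation for any $k\geq 4$, which is exactly item~(5). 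The main obstacle in writing this cleanly is item~(5): one must state precisely which notion of "relativized complete square representation" is meant and cite the exact lemma (in the style of \cite[Theorem 13.45 and §13.2]{HHbook}) that converts such a representation of an atomic algebra into a neat embedding of its complex algebra into $n+k$ extra dimensions; the other four items are genuinely immediate once Theorems~\ref{hodkinson} and \ref{main2} are in hand.
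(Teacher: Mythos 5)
Your derivations are correct and follow essentially the route the paper intends: the paper's own ``proof'' of this corollary is a one-line deferral to Hodkinson's paper for the standard connections (density of the term algebra in its completion, preservation of Sahlqvist equations under Dedekind--MacNeille completions, the link between complete relativized square representations and neat embeddability, and the inclusions $S\Nr_n\CA_{n+k}\subseteq S\Nr_n\CA_{n+4}\supseteq \RCA_n$), all of which you supply explicitly. The only point genuinely requiring the care you already flagged is item (5), where the lemma converting a complete $(n+k)$-square relativized representation of an atomic algebra into membership of its completion in $S\Nr_n\CA_{n+k}$ must be quoted precisely from the literature, since the paper itself only asserts this as the ``in particular'' clause of Theorem \ref{hodkinson} without proof.
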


\begin{proof} We refer to \cite{Hodkinson} where an all rounded picture of the connections of such notions are given.
\end{proof}
Finally, we mention that the above construction applies to many cylindric-like algebras, 
like polyadic algebras with and without equality, and also their proper common reduct called Pinter's substitution 
algebras. This follows from the simple observation that our algebras are generated by elements whose dimernsion sets are $<n$.
Also, if we consider the Pinter's algebra based on $n+2$ greens and $n+1$ reds defined analogously to the cylindric case, then
\pa\ can win the same last game, using the same strategy.


\begin{thebibliography}{}


\bibitem{1} H. Andr\'eka, M.Ferenczi, I. N\'emeti (Editors) {\bf Cylindric-like Algebras and Algebraic Logic},
Andr\'eka, Ferenczi, N\'emeti (Editors) Bolyai Society Mathematical Studies p.205-222 (2013).

\bibitem{ANT}H. Andr\'eka, I.  N\'emeti, T. Sayed Ahmed,
{\it Omitting types for finite variable fragments and complete representations of algebras,}
Journal of Symbolic Logic, {\bf 73}(1) (2008), p.65-89.



\bibitem{HMT1} L. Henkin, J.D. Monk and  A.Tarski, {\it Cylindric Algebras Part I}.
North Holland, 1971.

\bibitem{HMT2} L. Henkin, J.D. Monk and  A.Tarski, {\it Cylindric Algebras Part II}.
North Holland, 1985.




\bibitem{r} R. Hirsch, {\it Relation algebra reducts of cylindric algebras and complete representations},
The Journal of Symbolic Logic, Vol. 72, Number 2, June 2007.

\bibitem{HHbook} R. Hirsch and I. Hodkinson, {\it Relation algebras by games.}
Studies in Logic and the Foundations of Mathematics. Volume 147. (2002)




\bibitem{HHbook2} R. Hirsch and I. Hodkinson {\it Completions and complete representations in algebraic logic} In \cite{1}.



\bibitem{Hodkinson} Hodkinson {\it Atom strctures of relation and cylindric algebras}
Annals of pure and aplied logic (1997) p. 117-148






\bibitem{Sayed} T. Sayed Ahmed
{\it Neat reducts and neat embeddings in cylindric algebras}. In \cite{1}

\end{thebibliography}
\end{document}